\RequirePackage[hyphens]{url}
\documentclass[a4paper]{ifacconf}
\usepackage[T1]{fontenc}
\usepackage{lmodern}

\usepackage{comment}
\usepackage{enumitem}
\usepackage{natbib}
\usepackage{notation}

\usepackage{figures}
\newcommand{\defining}[1]{\textbf{#1}}

\begin{document}

\begin{frontmatter}

\title{The Dynamic Search for the Minimal Dynamic Extension}
\thanks[footnoteinfo]{This project was not funded or sponsored by any institution or grant. This work has been submitted to IFAC for possible publication.}
\author[First]{Rollen S. D'Souza} 
\address[First]{e-mail: research@rollends.ca.}

\begin{abstract}
  Identifying the dynamic precompensator that renders a nonlinear control system feedback linearizable is a challenging problem.
  Researchers have explored the problem --- dynamic feedback linearization --- and produced existence conditions and constructive procedures for the dynamic precompensator.
  These remain, in general, either computationally expensive or restrictive.
  Treating the challenge as intrinsic, this article views the problem as a search problem over a category.
  Dynamic programming applies and, upon restriction to a finite category, classic search algorithms find the minimal dynamic extension.
  Alternatively, a heuristic aiming towards feedback linearizable systems can be employed to select amongst the infinitely-many extensions.
  This framing provides a distinctive, birds-eye view of the search for the dynamic precompensator.
\end{abstract}

\begin{keyword}
  dynamic feedback linearization; dynamic extension; differential flatness; dynamic programming.\\

  \emph{2020 Mathematics Subject Classification}: 93C10, 93B27, 93C35
\end{keyword}

\end{frontmatter}

\section{Introduction}
Identifying whether a nonlinear control system,\hfill
\[
  \dot{x} = f(x, u),
\]
can be transformed into a linear control system in Brunovsk\'y normal form,\hfill
\[
  \dot{\xi} = A\,\xi + B\,\nu,
\]
through the use of a state (\(\xi = \phi(x)\)) and feedback transformation (\(\nu = \beta(x, u)\)) has been well-understood for decades~\citep{isidori_nonlinear_1995}.
A nonlinear control system that has such a transformation is said to be (statically) feedback linearizable.
\cite{hunt_design_1982} and~\cite{jakubczyk_linearization_1980} independently proposed the necessary and sufficient conditions for when a nonlinear control system is feedback linearizable.
Unfortunately, the class of feedback linearizable systems is small.
Even simple, practical control systems --- such as those that model non-holonomic vehicles --- fail to be feedback linearizable.
This motivates the now-extensive study of \emph{dynamically} feedback linearizable systems and \emph{(differentially) flat} systems.
Dynamically feedback linearizable systems are those systems that, under appropriate dynamic precompensation,
\begin{center}
  \begin{tikzpicture}[x = 1cm, y = 1cm]

    \node[at = {(0, 0)}] (v) {};
    \node[system block, right = 0.5 of v] (Precompensator) {%
      \(%
        \begin{aligned}
          \dot{z} &= a(x, v, z)\\
          u &= b(x, v, z)
        \end{aligned}%
      \)
    };
    \node[system block, right = 0.5 of Precompensator] (System) {%
      \(\dot{x} = f(x, u)\)%
    };

    \draw[system path, - Latex]
      (v.base) node[above] {\(v\)} -- (Precompensator.west);
    \draw[system path, - Latex]
      (Precompensator.east) -- node[above, midway] {\(u\)}(System.west);
    \draw[system path, - Latex]
      (System.east) 
        node[above right] {\(x\)}
        -- 
      +(0.2, 0)
        --
      +(0.2, -1)
        -|
      (Precompensator.south);

  \end{tikzpicture}
\end{center}
are feedback linearizable~\citep{charlet_dynamic_1989}.
Flat systems are similar in that they require an equivalence between the original system \((x, u)\) and the dynamics of an output and a finite number of its derivatives~\citep{levine_analysis_2009}.
\cite{levine_equivalence_2007} demonstrated the equivalence between flat and dynamically feedback linearizable systems.
Conditions upon which a system is dynamically feedback linearizable are known since the work of~\cite{guay_condition_1997}, but finding the required precompensator efficiently remains an open problem.
By restricting to two-input systems,~\cite{gstottner_necessary_2023} demonstrated a method to construct the flat outputs.
The conditions for flatness even in this case are complex, suggesting that the general conditions for flatness are computationally complex.
What if no computationally reasonable method exists for the multi-input, dynamic feedback linearizability problem?

Surrendering to this possibility, it is worth taking a hint from computer science where computationally expensive searches are made practical through heuristics.
Searching for differential geometric objects to achieve some end is not new.
The process of \emph{prolongation} was leveraged to great success by \'E. Cartan for bringing differential systems into integrability~\citep{malgrange_sur_2017, bryant_exterior_1991}.
Similarly, states added to a nonlinear control system that are derivatives of the input can result in a feedback linearizable system.
This process is called \emph{dynamic extension} and it mirrors prolongation.
When a sequence of dynamic extensions leads to a feedback linearizable system, the original system is dynamically feedback linearizable.
The reverse is not necessarily the case~\citep{nicolau_dynamic_2025}.
Systems that are dynamically feedback linearizable via a sequence of one-fold prolongations fall in the LSOP class.
The incremental construction of a precompensator through one-fold prolongations is popular and a number of algorithms have been proposed to arrive at a dynamic precompensator~\citep{battilotti_constructive_2004, astolfi_geometric_2008, levine_differential_2025-1, nicolau_dynamic_2025}.

Dynamic feedback linearization is seen as a search problem in this broad vein.
Can we search the space of dynamic extensions and incrementally build the minimal dynamic extension to feedback linearizability?
This article frames the problem in a manner that lends itself to finding the minimal dynamic extension efficiently when restricted to a subclass of the LSOP class.
This subclass is explicitly encoded through a formal characterization of the dynamic extension step in Section~\ref{section:geoemetry}.
Section~\ref{section:dynamic-programming} then takes a detour to discuss dynamic programming and its application to mathematical categories.
The prescribed subclass of systems forms a category where the arrows are precisely those projections tied to a dynamic extension.
A dynamic search prunes the category of non-optimal arrows producing, as a result, a category that enumerates all the optimal arrows to the original system.
If at least one arrow comes from a feedback linearizable system, then the system is dynamically feedback linearizable.
Section~\ref{section:deds} puts these pieces together as-described and demonstrates the approach, in part, through simple examples.

\section{Notation}
If \(x\) \(\in\) \(\Reals^n,\) \(x^i\) denotes the \(i\)th component.
Given a map \(F:\) \(\Reals^n\) \(\to\) \(\Reals^m,\) the map \(F^i:\) \(\Reals^n\) \(\to\) \(\Reals\) denotes the \(i\)th component function.
An \(n\)-dimensional manifold \(\Manifold{M}\) \(\subset\) \(\Reals^n\) is described locally through a coordinate chart \((\OpenSet{U};\) \((x^1,\) \(\ldots,\) \(x^n))\) where \(\OpenSet{U}\) \(\subseteq\) \(\Manifold{M}\) is an open subset and \(x^1,\) \(\ldots,\) \(x^n\) are the coordinates.
Let \(p\) \(\in\) \(\OpenSet{U}\) \(\subseteq\) \(\Manifold{M}.\)
The tangent space \(\TangentSpace{\Manifold{M}}{p}\) at \(p\) is spanned by vectors tangent to the manifold at \(p.\)
It can be bundled over \(\Manifold{M}\) to form the tangent bundle \(\TangentBundle{\Manifold{M}}.\)
The cotangent space \(\CotangentSpace{\Manifold{M}}{p}\) at \(p\) is spanned by covectors (one-forms) \(dx^1_p,\) \(\ldots,\) \(dx^n_p : \TangentSpace{\Manifold{M}}{p} \to \Reals.\)
The cotangent space can be bundled together as well to form the cotangent bundle \(\CotangentBundle{\Manifold{M}}.\)
The set of smooth sections of the cotangent bundle is denoted \(\CoTangentSections{\Manifold{M}}\) whose elements \(\omega\in \CoTangentSections{\Manifold{M}}\) are precisely the smooth one-forms of \(\Manifold{M}.\)
For \(k > 1,\) the space of \(k\)-forms at \(p\) is denoted \(\KFormSpaceAt{\Manifold{M}}{p}{k}.\)
Smooth \(k\)-forms are constructed in the same manner as smooth one-forms.
These combine to form the set of all smooth forms \(\FormSections{\Manifold{M}}\) on \(\Manifold{M}.\)
Given \(\beta\) \(\in\) \(\SmoothSections{\KFormSpaceOf{\Manifold{M}}{k}},\) the exterior derivative is \(d\beta\) \(\in\) \(\SmoothSections{\KFormSpaceOf{\Manifold{M}}{k+1}}.\)
If \(\alpha\) \(\in\) \(\SmoothSections{\KFormSpaceOf{\Manifold{M}}{\ell}},\) then \(\alpha\) \(\wedge\) \(\beta\) \(\in\) \(\SmoothSections{\KFormSpaceOf{\Manifold{M}}{k+\ell}}\) is known as their wedge product.
Given \(\beta^1,\) \(\ldots,\) \(\beta^k \in \CoTangentSections{\Manifold{M}},\) an ideal \(\Ideal{I}\) over the (wedge product) algebra of smooth forms may be generated with the notation \(\langle \beta^1,\) \(\ldots,\) \(\beta^k \rangle\) \(\subseteq\) \(\FormSections{\Manifold{M}}.\)
An ideal \(\Ideal{I}\) is differentially closed when \(d\omega \in \Ideal{I}\) for all \(\omega \in \Ideal{I}.\)
The Lie derivative of a form \(\omega\) along a vector field \(f \in \TangentSections{\Manifold{M}}\) is \(\LieDerivative{f}\omega\) with \(k\) repeated applications denoted \(\LieDerivative{f}^k\omega.\)

\section{The Geometry of Dynamic Extension}
\label{section:geoemetry}
A formal definition of dynamic extension is required to characterize the class of systems to search.
The nonlinear control system is viewed as an exterior differential system (i.e. a system of differential constraints)~\citep{guay_condition_1997, dsouza_algorithm_2023}.
Dynamic extension is then seen as the addition of constraints arising through Lie derivatives.

\subsection{Feedback Linearization Theory}
Let \(x \in \Manifold{X}\) \(=\) \(\Reals^n\) be the state vector of \(n\) states and \(u \in \Manifold{U}\) \(=\) \(\Reals^m\) be the control input vector of \(m\) inputs.
Suppose the dynamics of \(x\) are governed by the nonlinear control system,
\begin{equation}
  \label{eqn:dynamics}
  \dot{x} = f(x, u),
\end{equation}
where \(f: \Manifold{X}\times\Manifold{U} \to \TangentBundle{X}\) is smooth.
Equivalently, the system may be viewed as an exterior differential system \(\Ideal{I}^{(0)}\) \(\subseteq\) \(\FormSections{\Manifold{M}}\) on the extended manifold \(\Manifold{M}\) \(=\) \(\Reals\) \(\times\) \(\Manifold{X}\) \(\times\) \(\Manifold{U}\) generated by a set of smooth one-forms,
\begin{equation}
  \label{eqn:eds}
  \Ideal{I}^{(0)}
    \defineas 
      \langle
        dx^1 - f^1(x, u)\,dt,
        \ldots,
        dx^n - f^n(x, u)\,dt
      \rangle.
\end{equation}
The pair \((\Manifold{M}, \Ideal{I}^{(0)})\) is called a nonlinear control system.
Tied to any nonlinear control system is the \emph{derived flag},
\[
  \Ideal{I}^{(0)} \supseteq \Ideal{I}^{(1)} \supseteq \cdots \supseteq \Ideal{I}^{(n)} = \Ideal{I}^{(n+1)},
\]
defined by the recursive formula,
\[
  \Ideal{I}^{(k)} = \{
    \omega :
    d\omega \in \Ideal{I}^{(k-1)}
  \},
  \quad
  k \geq 1.
\]

The derived flag is used --- in its augmented form \(\langle \Ideal{I}^{(k)}, dt \rangle\) --- to identify how inputs drive smooth functions of the state.
Recall that the search for a feedback transformation that renders the system dynamics~\eqref{eqn:dynamics} linear is equivalent to the search for an output function \(h: \Manifold{X} \to \Reals^m\) that yields a special property known as vector relative degree;
this was a view endorsed by~\cite{isidori_nonlinear_1995}.
The definition of (vector) relative degree in its dual formulation follows.
%
\begin{defn}[Vector Relative Degree]
  \label{def:vector-relative-degree}
  A smooth function of the state \(h = (h^1,\ldots, h^\rho): \Manifold{X} \to \Reals^\rho\) is said to have \defining{vector relative degree \((\kappa^1, \ldots, \kappa^\rho)\)} \(\geq\) \(0\) at a point \(x_0 \in \Manifold{X}\) if, on an open neighbourhood of \(x_0,\)
  \[
    \LieDerivative{f}^{j} dh^i \wedge dx^{1} \wedge \cdots dx^{n} = 0, \qquad 1\leq i \leq \rho, 0 \leq j < \kappa^i,
  \]
  and, at \(x_0,\)
  \[
    (\LieDerivative{f}^{\kappa^1}dh^1)_{x_0} \wedge \cdots \wedge (\LieDerivative{f}^{\kappa^\rho}dh^\rho)_{x_0} \wedge
    dx^{1}_{x_0} \wedge \cdots \wedge dx^{n}_{x_0} \neq 0.
  \]
  It is said to have \defining{uniform} vector relative degree if \(\kappa^1 = \cdots = \kappa^\rho.\)
\end{defn}
By Definition~\ref{def:vector-relative-degree}, an output \(h\) has relative degree zero if, and only if,
\[
    dh_{x_0} \wedge dx^{1}_{x_0} \wedge \cdots \wedge dx^{n}_{x_0} \neq 0.
\]
This non-standard notion has use in defining dynamic extensions:
any dynamic extension is tied to some output of relative degree zero.

The derived flag is used to identify those output functions that yield (vector) relative degree.
Outputs of uniform vector relative degree \(\kappa\) must live in the ideal \(\langle \Ideal{I}^{(\kappa-1)}, dt \rangle\) but not live in the ideal \(\langle \Ideal{I}^{(\kappa)}, dt \rangle.\)
Since determining whether a nonlinear control system~\eqref{eqn:eds} is feedback linearizable is equivalent to finding an output of appropriate relative degree, it comes as no surprise that the derived flag structure determines whether a system is feedback linearizable.
The standard dual conditions for feedback linearizability --- assuming the regularity of all objects --- are the involutivity condition,
\begin{equation}
  \tag{Inv}
  \label{eqn:involutivity}
  \langle \Ideal{I}^{(k)}, dt \rangle \subseteq \langle \Ideal{I}^{(k)}, dt \rangle^{(\infty)},
  \quad 0 \leq k \leq n,
\end{equation}
and the controllability condition,
\begin{equation}
  \tag{Con}
  \label{eqn:controllability}
  \langle \Ideal{I}^{(n)}, dt \rangle = \langle dt \rangle.
\end{equation}
All systems in this article are assumed to satisfy the controllability condition~\eqref{eqn:controllability} as this is necessary for dynamic feedback linearization.
It is the failure of the involutivity condition~\eqref{eqn:involutivity} we aim to resolve through dynamic extension.

\subsection{Regular Zero Dynamics Foliations}
To every output \(h:\) \(\Manifold{M}\) \(\to\) \(\Reals^\rho,\) we may associate a zero dynamics manifold \(\Manifold{Z}\) which is the largest controlled-invariant set where the output vanishes.
It characterizes the dynamics of the system when the output is zeroed.
When the output \(h\) has (positive) vector relative degree \(\kappa,\) the zero dynamics manifold has a simple structure where \(\Manifold{Z} = \{ x \in \Manifold{X} : h(x)\) \(=\) \(\cdots\) \(=\) \(\LieDerivative{f}^{\kappa^{\rho}-1} h(x) = 0 \}.\)
All outputs with relative degree induce a zero dynamics manifold, but not all zero dynamics manifolds are induced by an output with relative degree.
For this reason,~\cite{dsouza_algorithm_2023} introduced the notion of a \emph{regular} zero dynamics manifold.
Instead of placing an emphasis on a single manifold, we instead consider a \emph{family} of manifolds induced by the level sets of a smooth function.
The level sets of a smooth function \(h: \OpenSet{U} \to \Reals^\rho\) near a regular point \(x_0\) form an \emph{\((n-\rho)\)-dimensional foliation}:
the disjoint union of \((n-\rho)\)-dimensional, immersed submanifolds (leaf) of \(\OpenSet{U}\) that cover the space and for which a coordinate system exists where they are ``flat''~\citep[pp. 501]{lee_introduction_2012}.
Like in~\cite{dsouza_algorithm_2023}, these smooth functions are required to have vector relative degree.
This defines a regular zero dynamics foliation.
\begin{defn}[Regular Zero Dynamics Foliation]
  A foliation \(\Foliation{Z}\) is said to be a \defining{regular zero dynamics foliation (of type \((\rho, \kappa)\))} if there exists a smooth function of the state \(h: \Manifold{X} \to \Reals^\rho\) that has vector relative degree \(\kappa\) and whose level sets are the leaves of the foliation.
\end{defn}
It is straightforward to define a regular zero dynamics foliation from an output with relative degree.
On the other hand, how do we know that an arbitrary foliation is generated by an output with relative degree?

Given \emph{any} foliation \(\Foliation{Z},\) we can construct the differential ideal,
\[
  \mathfrak{I}(\Foliation{Z})
    \defineas
      \{
        \omega \in \FormSections{\Manifold{M}}
        \colon
        \left.\omega\right|_\Manifold{N} = 0, \forall \Manifold{N} \in \Foliation{Z}
      \}.
\]
To any differential ideal \(\Ideal{K}\), Frobenius's Theorem may be used to construct the foliation \(\mathfrak{F}(\Ideal{K})\) that generates it~\citep{bryant_exterior_1991}.
Clearly \(\mathfrak{F}(\mathfrak{I}(\Foliation{Z})) = \Foliation{Z}\) and \(\mathfrak{I}(\mathfrak{F}(\Ideal{K})) = \Ideal{K}.\)
Given any two foliations \(\Foliation{Z}_1\) and \(\Foliation{Z}_2\), define their \emph{meet} by,
\begin{equation}
  \label{eqn:meet}
  \Foliation{Z}_1 \wedge \Foliation{Z}_2
    \defineas
      \mathfrak{F}\left(
        \mathfrak{I}(\Foliation{Z}_1)
        +
        \mathfrak{I}(\Foliation{Z}_2)
      \right).
\end{equation}
When the meet \(\Foliation{Z}_1 \wedge \Foliation{Z}_2\) is a regular zero dynamics foliation, \(\Foliation{Z}_1 \wedge \Foliation{Z}_2\) is the largest regular zero dynamics foliation that simultaneously foliates leaves of \(\Foliation{Z}_1\) and \(\Foliation{Z}_2.\)
A partial order may be induced by the meet operation.
Define the binary relation,
\[
  \Foliation{Z}_1 \leq \Foliation{Z}_2
  \quad \mathrm{iff} \quad 
  \Foliation{Z}_1 \wedge \Foliation{Z}_2 = \Foliation{Z}_1.
\]
Clearly if \(\Foliation{Z}_1 \leq \Foliation{Z}_2\) then \(\Dimension\Foliation{Z}_1 \leq \Dimension\Foliation{Z}_2.\)

Define \(\mathfrak{Z}(\Ideal{K})\) as the largest foliation where leaves are rendered invariant under the dynamics \(f: \Manifold{X}\times\Manifold{U} \to \TangentBundle{X}\) of the nonlinear control system \(\Ideal{I}^{(0)}.\)
Formally,
\begin{equation}
  \label{eqn:controlled-invariance}
  \LieDerivative{f} \left[
    \mathfrak{I}(\mathfrak{Z}(\Ideal{K}))
    \cap
    \langle \Ideal{I}^{(0)}, dt \rangle
  \right]
  \subseteq
    \mathfrak{I}(\mathfrak{Z}(\Ideal{K})).
\end{equation}
This is a controlled-invariance requirement.
When \(\Ideal{K}\) is differentially closed, \(\mathfrak{Z}(\Ideal{K})\) is computed using repeated Lie derivatives of the exact generators along \(f\) until convergence.

Given a foliation \(\Foliation{Z},\) the ideal \(\mathfrak{I}(\Foliation{Z})\) is used to identify whether or not \(\Foliation{Z}\) is a regular zero dynamics foliation.
Asking whether \(\Foliation{Z}\) is a regular zero dynamics foliation is approximately equivalent to demanding that each leaf of \(\Foliation{Z}\) is transverse feedback linearizable.
A minor modification of the algorithm presented by~\cite{dsouza_algorithm_2023} produces the outputs that define the regular zero dynamics foliation.
\begin{prop}
  \label{prop:reg-zero-dyn-fol}
  A \(d\)-dimensional foliation \(\Foliation{Z}\) of \(\OpenSet{V} \subseteq \Manifold{M}\) is a regular zero dynamics foliation (of type \((\rho, \kappa)\)) at \(p_0 = (t_0, u_0, x_0) \in \OpenSet{U}\) on \((\Manifold{M}, \Ideal{I}^{(0)})\) if,
  \begin{enumerate}[label={(\roman*)}]
    \item{
      \label{prop:reg-zero-dyn-fol:invariance}
      \(
        \Foliation{Z} = \mathfrak{Z}(\mathfrak{I}(\Foliation{Z})),
      \)
    }
    \item{
      \label{prop:reg-zero-dyn-fol:regularity}
      \(
        \left\langle \Ideal{I}^{(k)}, dt \right\rangle \cap \mathfrak{I}(\Foliation{Z})
      \)
      is simply, finitely, non-degenerately generated, for every \(0 \leq k \leq d,\)
    }
    \item{
      \label{prop:reg-zero-dyn-fol:involutivity}
      \(
        \left\langle \Ideal{I}^{(k)}, dt \right\rangle \cap \mathfrak{I}(\Foliation{Z})
      \)
      is differentially closed for every \(0 \leq k \leq d,\) and
    }
    \item{
      \label{prop:reg-zero-dyn-fol:controllability}
      \(
        \left\langle \Ideal{I}^{(n-d)}, dt \right\rangle \cap \mathfrak{I}(\Foliation{Z})
        =
        \langle dt \rangle.
      \)
    }
  \end{enumerate}
\end{prop}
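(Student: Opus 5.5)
The plan is to reduce the statement to the transverse feedback linearization result of \cite{dsouza_algorithm_2023}, applied leafwise, and then to check that the resulting outputs are defined uniformly across the leaves.

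First, use condition~\ref{prop:reg-zero-dyn-fol:invariance}. Since \(\Foliation{Z} = \mathfrak{Z}(\mathfrak{I}(\Foliation{Z}))\), the ideal \(\mathfrak{I}(\Foliation{Z})\) satisfies the controlled-invariance relation~\eqref{eqn:controlled-invariance}. So every leaf of \(\Foliation{Z}\) is a controlled-invariant submanifold, and near \(p_0\) there is a feedback, smooth on \(\OpenSet{V}\), that renders all leaves invariant at once.

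Next, use Frobenius's Theorem. \(\mathfrak{I}(\Foliation{Z})\) is a differentially closed ideal generated, near \(p_0\), by \(dt\) and the exact one-forms \(d\lambda^1,\ldots,d\lambda^{n-d}\), where the \(\lambda^i\) are functions of the state that are constant on the leaves.

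Then intersect the augmented derived flag with \(\mathfrak{I}(\Foliation{Z})\). This gives a descending filtration
\[
  \Ideal{J}_k \defineas \langle \Ideal{I}^{(k)}, dt\rangle \cap \mathfrak{I}(\Foliation{Z}), \qquad 0\le k\le d.
\]
Condition~\ref{prop:reg-zero-dyn-fol:regularity} says each \(\Ideal{J}_k\) has constant rank near \(p_0\). Condition~\ref{prop:reg-zero-dyn-fol:involutivity} says each \(\Ideal{J}_k\) is differentially closed, so Frobenius applies to each of them: each \(\Ideal{J}_k\) is generated by \(dt\) and exact differentials of state functions that are constant on leaves. Condition~\ref{prop:reg-zero-dyn-fol:controllability} says the filtration terminates at \(\langle dt\rangle\).

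These are exactly the hypotheses of the transverse feedback linearization algorithm of \cite{dsouza_algorithm_2023}, written in the dual (ideal) form. Following that algorithm, I would build the output \(h\) from the bottom of the filtration upward:
\begin{enumerate}[label={(\alph*)}]
  \item Let \(k\) be the largest index with \(\Ideal{J}_k\neq\langle dt\rangle\). Choose exact generators of \(\Ideal{J}_k\) modulo \(dt\); these are the outputs \(h^i\) of highest relative degree.
  \item Moving to each lower index \(k\), take the Lie derivatives along \(f\) of the outputs already chosen. Complete them with new exact generators of \(\Ideal{J}_k\), chosen complementary to the quotient by \(\Ideal{J}_{k+1}\) plus those Lie derivatives. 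The new generators are the outputs of lower relative degree.
\end{enumerate}

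It remains to verify Definition~\ref{def:vector-relative-degree} for the resulting \(h=(h^1,\ldots,h^\rho)\). The vanishing part follows from membership of the forms in the flag: membership in \(\langle\Ideal{I}^{(k)},dt\rangle\) means that the first \(k\) Lie derivatives do not involve \(du\). The non-degeneracy wedge condition at \(x_0\) follows from the complementary choices in step (b) together with the constant-rank hypothesis. Finally, the collection of all \(\LieDerivative{f}^{j}h^i\) spans \(\mathfrak{I}(\Foliation{Z})\) modulo \(dt\). Hence the common level sets of \(h\) have the same integral ideal, and therefore \(\mathfrak{F}(\mathfrak{I}(\Foliation{Z})) = \Foliation{Z}\) is the regular zero dynamics foliation of type \((\rho,\kappa)\), where \(\kappa\) is read off from the filtration jumps.

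The main obstacle is the step from a single zero dynamics manifold, which is what \cite{dsouza_algorithm_2023} treats, to the whole foliation. The outputs produced on each leaf must be shown to patch into one smooth \(h\) on \(\OpenSet{V}\). My plan is to carry out the construction directly at the ideal level: since the \(\Ideal{J}_k\) are regular and differentially closed on an open set, their Frobenius coordinates are automatically smooth in the transverse direction, and this should give the uniformity. I also expect some care to be needed in showing that the generators are independent of \(t\) and \(u\), so that \(h\) is genuinely a function of the state. I would get this by noting that \(\mathfrak{I}(\Foliation{Z})\cap\langle\Ideal{I}^{(0)},dt\rangle\) contains no \(du\) components modulo \(dt\).
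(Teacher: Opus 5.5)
Your overall route is the paper's. You intersect the augmented derived flag with $\mathfrak{I}(\Foliation{Z})$, integrate each regular, differentially closed intersection, and build outputs from the most derived ideal downward, adapting exact generators to contain the Lie derivatives of outputs already found. Working at the ideal level from the start, as the paper does, removes your leafwise patching worry.

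The genuine gap is at the bottom of the construction. $\Foliation{Z}$ is a foliation of $\OpenSet{V}\subseteq\Manifold{M}$, not of state space. Condition (i), through~\eqref{eqn:controlled-invariance}, forces $\mathfrak{I}(\Foliation{Z})$ to contain $\LieDerivative{f}$ of its state part, and these forms carry $du$ terms. So your description of $\mathfrak{I}(\Foliation{Z})$ as generated by $dt$ and differentials of state functions $\lambda^i$ is wrong. If it held, (i) would make the $\lambda^i$ an autonomous subsystem. Their differentials would then lie in every $\langle\Ideal{I}^{(k)},dt\rangle$, and (iv) would force $\mathfrak{I}(\Foliation{Z})=\langle dt\rangle$.

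Your filtration $\Ideal{J}_k$ starts at $k=0$, inside $\langle\Ideal{I}^{(0)},dt\rangle$, so it never sees the rest of $\mathfrak{I}(\Foliation{Z})$. Let $\rho^0$ be the number of independent generators of $\mathfrak{I}(\Foliation{Z})$ modulo $\Ideal{J}_0$, and let $\rho^1=\Dimension(\Ideal{J}_0/\Ideal{J}_1)$. Every output you build has relative degree at least one. Hence the ideal of the foliation $\mathfrak{Z}(\langle dh^1,\ldots,dh^\rho\rangle)$ they generate contains, beyond $\Ideal{J}_0$, only the $\rho^1$ top forms $\LieDerivative{f}^{\kappa^i}dh^i$. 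When $\rho^0>\rho^1$ that foliation is strictly coarser than $\Foliation{Z}$, and your closing claim that the $\LieDerivative{f}^j h^i$ span $\mathfrak{I}(\Foliation{Z})$ modulo $dt$ is false. Even when $\rho^0=\rho^1$, that claim needs a dimension count you never give.

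The missing step, which the paper carries out, sits one level below your $\Ideal{J}_0$. You adapt generators of $\mathfrak{I}(\Foliation{Z})$ itself against the foliation built so far, and integrate the $\rho^0-\rho^1$ novel ones. These give outputs of relative degree zero, which are functions that genuinely depend on $u$. The paper's non-standard condition $dh\wedge dx^1\wedge\cdots\wedge dx^n\neq 0$ can only hold for such functions, even though Definition~\ref{def:vector-relative-degree} is worded for functions of the state. This is why the paper's Kronecker indices carry the offset $-1$ and why the type is $(\rho^0,\kappa)$. Only after this step do a dimension count and the fact that every generator used lies in $\mathfrak{I}(\Foliation{Z})$ yield equality with $\Foliation{Z}$.

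Your planned argument that $h$ has no $u$-dependence therefore works against you. It is correct for the components drawn from $\Ideal{J}_0$, but the full output must be allowed to depend on $u$.

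Two smaller points:
\begin{itemize}
  \item Condition (i) is needed precisely in your step~(b). It is what places $\LieDerivative{f}dh^i$ in $\mathfrak{I}(\Foliation{Z})$, and hence in $\Ideal{J}_k$, at all.
  \item You flag time-independence of the generators but never argue it; the paper derives it from (iv).
\end{itemize}
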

\begin{pf}See Appendix~\ref{pf:prop:reg-zero-dyn-fol}\end{pf}
%

The proof of Proposition~\ref{prop:reg-zero-dyn-fol} provides an algorithm that computes the output with suitable vector relative degree at \(x_0\) whose levels sets coincide with leaves of the foliation \(\Foliation{Z}.\)
We can now, without loss of generality, move between regular zero dynamics foliations, their differential ideals, and the output with vector relative degree used to generate them.

\subsection{Extension along a Regular Zero Dynamics Foliation}
Regular zero dynamics foliations provide enough information to define a notion of dynamic extension.
Given a regular zero dynamics foliation of type \((\rho^0, \kappa)\), we can construct an output \(h: \Manifold{M} \to \Reals^{\rho^0}\) that has vector relative degree \(\kappa.\)
This output defines a channel to \(\rho^0\) inputs.
The dynamic extension of this channel is not a pure prolongation:
the derivatives of \(h\) are driven by a function of the input and state.
Write \(h = (h^1, \ldots, h^{\rho_0})\) and define virtual inputs, for every \(1 \leq j \leq \rho_0,\) by setting,
\[
  \LieOperator_{f}^{\kappa_j} h^j = \alpha(x, u) \asdefine
  v^j.
\]
Prolongation of the inputs \(v^j\) constitutes a dynamic extension of the output \(h.\)
This class of extensions is illustrated in Figure~\ref{fig:dsouza-dynamic-extension}.
The dynamics of the components \(\alpha^j\) --- whose right-inverse \(u = \beta(x, z)\) is the preliminary feedback --- are in Brunovsk\'y normal form.
\begin{figure}
  \centering
  \begin{tikzpicture}[x = 1cm, y = 1cm]

    \node[at = {(0, 0)}] (v) {};
    \node[system block, right = 0.5 of v] (Precompensator) {%
      \(Z^j(s) = \frac{1}{s^{\kappa^j}}\,V(s)\)%
    };
    \node[system block, right = 0.5 of Precompensator] (FT) {%
      \(\beta(z, x)\)%
    };
    \node[system block, right = 0.5 of FT] (System) {%
      \(\dot{x} = f(x, u)\)%
    };

    \draw[system path, - Latex]
      (v.base) node[above] {\(v\)} -- (Precompensator.west);
    \draw[system path, - Latex]
      (Precompensator.east) --  node[above, midway] {\(z\)} (FT.west);
    \draw[system path, - Latex]
      (FT.east) -- node[above, midway] {\(u\)}(System.west);
    \draw[system path, - Latex]
      (System.east) 
        node[above right] {\(x\)}
        -- 
      +(0.2, 0)
        --
      +(0.2, -1)
        -|
      (FT.south);

  \end{tikzpicture}
  \caption{
    A system precompensated by outputs extended by chains of integrators with lengths \(\kappa^j.\)
  }
  \label{fig:dsouza-dynamic-extension}
\end{figure}
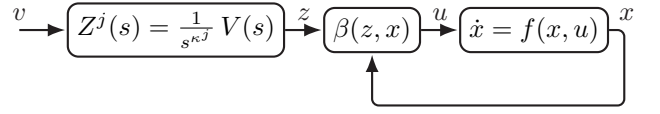
This concept is captured in an extension system: a (lifted) regular nonlinear control system which projects down in just the right way onto a (base) regular nonlinear control system.
\begin{defn}[Extension System]
  \label{def:extension}
  Let \((\Manifold{M}_1, \Ideal{I}^{(0)}_1)\) be a regular nonlinear control system (with time \(t,\) inputs \(u\) and states \(x\)) and take \((\Manifold{M}_2, \Ideal{I}^{(0)}_2)\) to be another regular nonlinear control system (with time \(t,\) inputs \(v\) and states \(y\)).
  The system \((\Manifold{M}_2, \Ideal{I}^{(0)}_2)\) is an \defining{extension system} of the system \((\Manifold{M}_1, \Ideal{I}^{(0)}_1)\) if there exists a smooth submersion \(\pi: \Manifold{M}_2 \to \Manifold{M}_1\) which takes the form 
  \[
    (t, u, x) = \pi(t, v, y) = (t, \beta(v, y), \phi(y))
  \]
  and satisfies,
  \begin{enumerate}[label={(\roman*)}]
    \item{
      \label{def:extension:constraints}
      \(\pi^* \Ideal{I}^{(0)}_1 \subseteq \Ideal{I}^{(0)}_2,\) and
    }
    \item{
      \label{def:extension:extensionmanifold}
      there exists a regular zero dynamics foliation \(\Foliation{Z}_1\) on \((\Manifold{M}_1, \Ideal{I}^{(0)}_1)\) so that
      \[
          \langle \pi^* \FormSections{\Manifold{M}_1}, \mathfrak{I}(\Foliation{Z}_2), dt \rangle = \FormSections{\Manifold{M}_2},
      \]
      where \(\Foliation{Z}_2\) \(=\) \(\mathfrak{Z}(\pi^* \mathfrak{I}(\Foliation{Z}_1))\) is a regular zero dynamics foliation on \((\Manifold{M}_2, \Ideal{I}^{(0)}_2).\)
    }
  \end{enumerate}
\end{defn}
The first condition~\ref{def:extension:constraints} ensures the base system's dynamics are captured in the lifted system \(\Ideal{I}^{(0)}_2\) on \(\Manifold{M}_2.\)
The definition's weight is carried by condition~\ref{def:extension:extensionmanifold} which enforces the existence of an output used to construct the dynamic extension:
the output, its Lie derivatives along the base system's dynamics, and the derivatives of its virtual inputs ``cover'' the new states.
%
Even though this class is more relaxed than the class of precompensators arrived at through pure prolongations, it retains a key property.
\begin{thm}
  \label{thm:DFL-LSOP}
  If \((\Manifold{M}_2,\) \(\Ideal{I}^{(0)}_2)\) is an extension system of \((\Manifold{M}_1,\) \(\Ideal{I}^{(0)}_1)\) and \(\Dimension \Manifold{M}_2\) \(-\) \(\Dimension \Manifold{M}_1\) \(\geq\) \(2,\) then there exists a regular nonlinear control system \((\Manifold{M}_\bullet,\) \(\Ideal{I}^{(0)}_\bullet)\) where the diagram of extension systems below commutes.
  \begin{center}
    \begin{tikzpicture}[x = 1cm, y = 1cm]

      \node[at = {(0, 0)}] (M2) {\((\Manifold{M}_2, \Ideal{I}^{(0)}_2)\)};
      \node[below = 1 of M2] (Mp) {\((\Manifold{M}_\bullet, \Ideal{I}^{(0)}_\bullet)\)};
      \node[right = 1 of Mp] (M1) {\((\Manifold{M}_1, \Ideal{I}^{(0)}_1)\)};
      
      \draw[- Latex]
        (M2) -- node[above right, midway] {\(\pi\)}  (M1);

      \draw[- Latex]
        (M2) -- node[right, midway] {\(\pi_\bullet\)} (Mp);

      \draw[- Latex]
        (Mp) -- node[above, midway] {\(\pi_1\)} (M1);

    \end{tikzpicture}
  \end{center}
\end{thm}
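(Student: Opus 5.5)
The plan is to factor the extension \(\pi\) through an intermediate system obtained by truncating the integrator chains of Figure~\ref{fig:dsouza-dynamic-extension} by one step. By condition~\ref{def:extension:extensionmanifold} of Definition~\ref{def:extension}, there is a regular zero dynamics foliation \(\Foliation{Z}_1\) on \((\Manifold{M}_1, \Ideal{I}^{(0)}_1)\) of some type \((\rho, \kappa)\). Proposition~\ref{prop:reg-zero-dyn-fol} then supplies an output \(h=(h^1,\ldots,h^\rho)\) with vector relative degree \(\kappa\) whose level sets are the leaves. The lifted foliation \(\Foliation{Z}_2 = \mathfrak{Z}(\pi^*\mathfrak{I}(\Foliation{Z}_1))\) is also a regular zero dynamics foliation, so it comes with an output \(\tilde h\) on \(\Manifold{M}_2\) of some vector relative degree \(\tilde\kappa\). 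The covering identity \(\langle \pi^*\FormSections{\Manifold{M}_1}, \mathfrak{I}(\Foliation{Z}_2), dt\rangle = \FormSections{\Manifold{M}_2}\) lets me pick coordinates on \(\Manifold{M}_2\) of the form \((t, x, z, v)\). Here the new coordinates \(z\) are Lie derivatives of the virtual inputs \(\alpha^j = \LieOperator_f^{\kappa^j}h^j\) (written \(z^j_1 = \alpha^j, \ldots, z^j_{\ell_j}\)), and \(\dot z^j_{\ell_j} = v^j\). The first step is to make this normal form precise. I would show that each added state is, modulo \(\pi^*\FormSections{\Manifold{M}_1}\) and \(dt\), an element of \(\mathfrak{I}(\Foliation{Z}_2)\) obtained by repeated Lie differentiation of \(\pi^*dh\). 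The fibre dimension \(\Dimension\Manifold{M}_2 - \Dimension\Manifold{M}_1 = \sum_j \ell_j\) then counts the chain lengths.

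Because this difference is at least \(2\), either some chain has \(\ell_{j_0}\geq 2\), or at least two chains have length \(\geq 1\). The construction of \(\Manifold{M}_\bullet\) differs in the two cases.

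\emph{Case \(\ell_{j_0}\geq 2\).} Define \(\Manifold{M}_\bullet\) by deleting the top state \(z^{j_0}_{\ell_{j_0}}\) and promoting \(z^{j_0}_{\ell_{j_0}}\) to an input. The map \(\pi_\bullet\) is the coordinate projection, with \(\beta_\bullet\) sending \((v, y)\) to the remaining inputs and to \(z^{j_0}_{\ell_{j_0}}\). The map \(\pi_1\) is the restriction of \(\pi\), which is well defined because \(\pi\) factors through the remaining coordinates.

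\emph{Case of two chains with \(\ell_{j}\geq 1\).} Drop one chain's top state in the same way, so the new inputs are \(v\) with one component replaced by that state.

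In either case the resulting system is again regular, with \(\Ideal{I}^{(0)}_\bullet\) generated by the corresponding subset of constraints of \(\Ideal{I}^{(0)}_2\). Condition~\ref{def:extension:constraints} is then immediate for all three arrows, since the generators pull back to generators and \(\pi = \pi_1\circ\pi_\bullet\) by construction. Commutativity of the diagram is therefore automatic.

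The main obstacle is checking condition~\ref{def:extension:extensionmanifold} for both \(\pi_1\) and \(\pi_\bullet\). For \(\pi_1\) I will reuse the same \(\Foliation{Z}_1\). I then need \(\mathfrak{Z}(\pi_1^*\mathfrak{I}(\Foliation{Z}_1))\) to be a regular zero dynamics foliation, and this follows by checking items \ref{prop:reg-zero-dyn-fol:invariance}–\ref{prop:reg-zero-dyn-fol:controllability} of Proposition~\ref{prop:reg-zero-dyn-fol} with the output \(\pi_1^*h\). Its relative degree is \(\kappa\) plus the truncated chain lengths, and the derived flag intersections are obtained from those on \(\Manifold{M}_2\) by removing one generator at the top of one chain. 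For \(\pi_\bullet\) I need a foliation on \(\Manifold{M}_\bullet\) whose zero dynamics lift covers the single new coordinate. The natural choice is the output \(z^{j_0}_{\ell_{j_0}-1}\) (or \(\alpha^{j}\) in the second case), which has relative degree zero or one with respect to the new input. I expect the delicate point to be the invariance requirement~\eqref{eqn:controlled-invariance}: one has to ensure that \(\mathfrak{Z}\) of the pulled-back ideal does not grow beyond the intended output derivatives. My approach would be to verify directly that the Lie-derivative closure stabilizes after the chain length, using that the \(z\)-dynamics are in Brunovsk\'y form.
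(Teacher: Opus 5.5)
Your top-down truncation depends on the normal form in your first step, and that normal form does not follow from Definition~\ref{def:extension}. The covering identity in condition~\ref{def:extension:extensionmanifold} only says that, modulo \(\pi^*\FormSections{\Manifold{M}_1}\) and \(dt\), the new directions on \(\Manifold{M}_2\) are spanned by the differentials of \(\LieDerivative{f}^{i}\pi^*h^j\) for \(\kappa^j < i \leq k_j\), where \(k_j\) is the relative degree of \(\pi^*h^j\) on \(\Manifold{M}_2\). It says nothing about how \(\Manifold{M}_2\) treats the directions already pulled back from \(\Manifold{M}_1\). The inputs complementary to \(\alpha\) need not remain inputs on \(\Manifold{M}_2\); they can become states whose dynamics involve the top input.

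Here is a concrete instance. Take \(\Manifold{M}_1\colon \dot x^1 = u^1,\ \dot x^2 = u^2\) with \(h = x^1\), so \(\kappa = 1\) and \(\alpha = u^1\). Let \(\Manifold{M}_2\) have states \((x^1,x^2,z_1,z_2,w)\), one input \(v\), and
\[ \dot x^1 = z_1,\quad \dot x^2 = w,\quad \dot z_1 = z_2,\quad \dot z_2 = v,\quad \dot w = x^1 + v, \]
with \(\pi(t,v,y) = (t,(z_1,w),(x^1,x^2))\). Then:
\begin{itemize}
\item condition~\ref{def:extension:constraints} holds;
\item \(\pi^*x^1\) has relative degree \(3\), and its zero dynamics ideal \(\langle dx^1,dz_1,dz_2,dv\rangle\), together with \(\pi^*\FormSections{\Manifold{M}_1}\) and \(dt\), spans all one-forms;
\item the dimension gap is \(2\), and the system is linear and controllable.
\end{itemize}
Here \(\ell_{j_0}=2\). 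But deleting \(z_2\) and promoting it to an input leaves the generator \(dw-(x^1+v)\,dt\), which is not a form on your \(\Manifold{M}_\bullet\). So \(\Ideal{I}^{(0)}_\bullet\) is undefined, and your subset-of-constraints argument for condition~\ref{def:extension:constraints} and for commutativity fails. Keeping \(w-z_2\) instead of \(w\) rescues this example, but that is no longer your normal form. In general you would have to prove that enough state coordinates with \(v^{j_0}\)-free derivatives exist.

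The paper needs no normal form for \(\Manifold{M}_2\) because it factors from the bottom. It picks a channel with \(k_j>\kappa^j\) and sets \(\Manifold{M}_\bullet=\Manifold{M}_1\times\Reals\), with the single new state \(\alpha^j=\LieDerivative{f}^{\kappa^j}h^j\) and a new input \(q\) replacing an input on which \(\alpha^j\) depends non-singularly. Thus \(\pi_1\) is the one-fold prolongation. Then \(\pi_\bullet\) only needs \(\pi^*\alpha^j\) to be a state function on \(\Manifold{M}_2\), which is exactly the condition \(k_j>\kappa^j\), and it sends \(q\) to the derivative of \(\pi^*\alpha^j\). In the example this gives \(\dot x^1=\alpha,\ \dot x^2=u^2,\ \dot\alpha=q\) and \(\pi_\bullet(t,v,y)=(t,(z_2,w),(x^1,x^2,z_1))\).

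For extensions of the kind in Figure~\ref{fig:dsouza-dynamic-extension}, where the complementary inputs pass through unchanged, your truncation does work. It also has the merit of placing the primitive arrow at the top, matching~\eqref{eqn:DP-Functor}. Your plan to verify condition~\ref{def:extension:extensionmanifold} for both arrows also goes beyond the paper, which only argues that \(\pi_\bullet\) exists. For the theorem as stated, however, you should either adopt the bottom-up factorization or state and prove the normal form as a separate lemma under an explicit extra hypothesis.
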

\begin{pf}
  Since \((\Manifold{M}_2,\) \(\Ideal{I}_2^{(0)})\) is an extension system of \((\Manifold{M}_1,\) \(\Ideal{I}_1^{(0)}),\) there exists a regular zero dynamics foliation \(\Foliation{Z}_1\) of type \((\rho, \kappa)\) on \(\Manifold{M}_1\) where \(\mathfrak{Z}(\pi^* \mathfrak{I}(\Foliation{Z}_1))\) produces the new states.
  There also exists an output \(h: \Manifold{M}_1 \to \Reals^{\rho}\) of vector relative degree \(\kappa\) on \(\Manifold{M}_1\) whose level sets are leaves of \(\Foliation{Z}_1.\)
  Necessarily, the output \(\pi^* h: \Manifold{M}_2 \to \Reals^{\rho}\) yields a vector relative degree \(\geq \kappa\) on \(\Manifold{M}_2.\)
  This difference must be at least two.
  Pick at least one component \(h^j\) of the output with relative degree \(\kappa^j\) where \(\pi^* h^j\) has a larger relative degree \(k > \kappa^j.\)
  Define \(\Manifold{M}_\bullet\) \(\defineas\) \(\Manifold{M}_1\) \(\times\) \(\Reals\) and set, by an abuse of notation,
  \[
    \Ideal{I}_\bullet^{(0)}
      \defineas 
        \langle \Ideal{I}_1{(0)}, \LieOperator_f^{\kappa^j} dh^j - q\,dt \rangle,
  \]
  where we've introduced a new input variable \(q.\)

  The projection map \(\pi_1\) is determined by picking \emph{any} input \(u^i\) that appears non-singularly in \(\LieOperator_f^{\kappa^j} dh^j.\)
  Without loss of generality pick \(u^1\) and set
  \[
    \pi_1:
      (t, q, u^2, \ldots, u^m, x, \LieOperator_f^{\kappa^j} dh^j)
      \mapsto 
      (t, u^1, \ldots, u^m, x).
  \]
  To construct \(\pi_\bullet: \Manifold{M}_2 \to \Manifold{M}_\bullet,\) we observe that \(\LieOperator_f^{\kappa^j} dh^j\) is a function of the states and inputs of \(\Manifold{M}_1\) and thus can be pulled back onto \(\Manifold{M}_2\) to define part of \(\pi_\bullet.\)
  The remaining states \(x\) on \(\Manifold{M}_\bullet\) that come from \(\Manifold{M}_1\) are also similarly covered.
  It suffices to show that \(q\) can be written as a smooth function on \(\Manifold{M}_2.\)
  This must be the case, however, since \(h^j\) has relative degree on \(\Manifold{M}_2\) and has a relative degree \(k\) which is at least greater than, or equal to, the relative degree of \(\pi_1^* h^j.\)
  Thus, \(\pi_\bullet\) exists and is smooth.\qed\hfill
\end{pf}

A corollary of Theorem~\ref{thm:DFL-LSOP} is that any feedback linearizable system arrived at through an extension system can be feedback linearized through a sequence of one-step dynamic extensions.
The original system is therefore in the LSOP class defined by~\cite{nicolau_dynamic_2025}.
The extension system's foliation is the ``natural'' choice for finding the required dynamic extension to the feedback linearizable system.
It is not known a-priori, however, what this natural choice is!
A search is required to find it.

\section{Dynamic Programming on Categories}
\label{section:dynamic-programming}
\cite{bellman_applications_1954} introduced dynamic programming through its applications to decision theory, optimal control theory and mathematical programming.
The approach builds on a key principle: the principle of optimality.
In this section, we reformulate this well-known principle into the language of categories.
\begin{prin}[Principle of Optimality]
  \label{principle:bellman}
  An optimal policy has the property that, whatever the initial state and initial decisions are, the remaining decisions must constitute an optimal policy with regard to the state resulting from the first decisions.
\end{prin}

\subsection{The Optimal Arrow}
A category \(\Category{X}\) is defined by a set of objects \(\ObjectClass_{\Category{X}}\) and a set of arrows \(\ArrowClass_{\Category{X}}\) that start and end at objects.
The arrows must be composable and their composition must be associative.
The identity arrow brings an object to itself.
Composite arrows are arrows that are composed of other non-identity arrows.
Primitive arrows are not composite.
Fix objects \(a,\) \(b \in \Category{X}.\)
The set of arrows between \(a\) and \(b\) is denoted \(\HomSet(a,b)\) and the set of primitive arrows between \(a\) and \(b\) is denoted \(\PrimSet(a, b).\)

Let objects in \(\Category{X}\) denote states of an optimization problem and arrows in \(\Category{X}\) denote transitions between these states.
A loss function \(L: \Category{X} \to \Reals\) over the category assigns a numeric value to every arrow in the category and must assign zero cost to the identity arrow.
Together, the category \(\Category{X}\) and its loss \(L\) sets up an optimization space, allowing us to consider the following problem.
%
%
%
\begin{prob}[The Optimal Arrow]
  \label{prob:optimal-arrow}
  Let \(\Category{G} \subseteq \Category{X}\) and fix \(b \in \Category{X}.\)
  Find, if possible, an object \(a \in \Category{G}\) and arrow \(h^* \in \HomSet(a, b)\) that solves,
  \[
    L(h^*) = 
      \min_{%
        \begin{subarray}{c}
          a \in \Category{G}\\ 
          h \in \HomSet(a, b)
        \end{subarray}%
      } L(h).
  \]
\end{prob}
In this problem, the goal is to find the object \(a\) in the goal category \(\Category{G}\) that has the minimum arrow to our fixed object \(b.\)
An optimal arrow \(h^*\) may not exist even when \(\HomSet(a, b)\) is non-empty:
the minimum may not be well-defined.
For simplicity, assume the minimum always exists when the set of arrows \(\HomSet(a, b)\) is non-empty.
Under this assumption, the notion of optimality is well-defined.
\begin{defn}
  Fix \(a,\) \(b \in \Category{X}.\)
  An arrow \(f: a\to b\) is said to be \defining{optimal} if it minimizes the loss \(L\) over all arrows \(g \in \HomSet(a, b)\) between \(a\) and \(b.\)
  Briefly put,
  \[
    L(f) = \min_{g \in \HomSet(a, b)} L(g).
  \]
  The arrow is said to be \defining{non-optimal} otherwise.
\end{defn}

Denote the set of all optimal arrows from \(a\) to \(b \in \Category{X}\) by \(\OptSet_L(a, b).\)
Dynamic programming algorithms incrementally solve the problem \(\OptSet(a, b)\) for objects \(a\) in the category until the goal is reached.
Principle~\ref{principle:bellman} can be restated as demanding that optimal arrows are post-composed of optimal arrows.
\begin{prin}[The Principle of Optimality]
  \label{principle:optimality}
  Fix \(a,\) \(b \in \Category{X}.\)
  If \(f \in \OptSet(a, b)\) is a \emph{composite} optimal arrow that decomposes into \(f = h \circ g\) for some \(g \in \HomSet(a, c),\) \(h \in \HomSet(c, b)\) and \(c \in \Category{X},\)
  then \(h \in \OptSet(c, b)\) is an optimal arrow.
\end{prin}
The special case on composite arrows is eliminated through logical manipulation as seen in Theorem~\ref{thm:PO-Character}.
\begin{thm}
  \label{thm:PO-Character}
  Principle~\ref{principle:optimality} is equivalent to asking: if \(h \in \ArrowClass_\Category{X}\) is non-optimal, then all arrows of the form \(h \circ g\) for some arrow \(g \in \ArrowClass_\Category{X}\) are non-optimal.
\end{thm}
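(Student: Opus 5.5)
The plan is to prove both directions of the equivalence by contraposition. The key move is to view Principle~\ref{principle:optimality} as an implication about composites and to absorb the ``composite'' hypothesis by allowing identity arrows in the decomposition.

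Write (P) for Principle~\ref{principle:optimality} and (Q) for the statement in Theorem~\ref{thm:PO-Character}. Throughout, take (Q) to quantify only over composable \(g\), that is, \(g \in \HomSet(a, c)\) whenever \(h \in \HomSet(c, b)\).

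\emph{(P) implies (Q).} Fix a non-optimal \(h \in \HomSet(c, b)\) and an arbitrary \(g \in \HomSet(a, c)\), and suppose for contradiction that \(f = h \circ g\) is optimal in \(\HomSet(a, b)\). There are two cases.
\begin{itemize}
  \item If neither \(g\) nor \(h\) is an identity, then \(f\) is composite with the decomposition \(f = h \circ g\) through \(c\). Then (P) gives \(h \in \OptSet(c, b)\), a contradiction.
  \item If \(g = \mathrm{id}_c\), then \(f = h\), which is non-optimal by assumption, again a contradiction. If instead \(h\) were an identity, it would trivially be optimal, since \(L(\mathrm{id}) = 0\) and optimality is taken in \(\HomSet(b,b)\). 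So this case cannot occur under the hypothesis that \(h\) is non-optimal.
\end{itemize}
In every case \(h \circ g\) is non-optimal, which is (Q).

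\emph{(Q) implies (P).} Let \(f \in \OptSet(a, b)\) be composite with \(f = h \circ g\), where \(g \in \HomSet(a, c)\) and \(h \in \HomSet(c, b)\). Suppose \(h \notin \OptSet(c, b)\). Then (Q) says every arrow of the form \(h \circ g'\) is non-optimal, and in particular \(f = h \circ g\) is non-optimal. This contradicts the optimality of \(f\), so \(h\) is optimal.

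The only step needing care is the bookkeeping around identity arrows and the word ``composite'', since (P) is stated only for composite arrows while (Q) ranges over all \(g\). I will handle this with the explicit case split on identities above. That case split relies on the convention that \(L(\mathrm{id}) = 0\). It also relies on whether an identity counts as optimal in its own hom-set. If that becomes delicate, for example with negative losses on loops at \(b\), I will note that \(h\) non-optimal already excludes \(h = \mathrm{id}\) being the troublesome case, and that the \(g = \mathrm{id}\) case is immediate.
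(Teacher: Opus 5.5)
Your direction (Q) \(\Rightarrow\) (P) is fine, and it is the direction the paper relies on in the proof of Theorem~\ref{thm:FunctorialLoss}, where (Q) is checked directly and (P) is read off. The paper gives no written argument for the equivalence beyond ``logical manipulation''.

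\textbf{The gap.} It is in (P) \(\Rightarrow\) (Q), in the sub-case \(h = \mathrm{id}_b\). Your claim that an identity is trivially optimal because \(L(\mathrm{id}) = 0\) is false. At this point \(L\) is an arbitrary real-valued assignment that is zero on identities. A single endomorphism \(\ell \in \HomSet(b,b)\) with \(L(\ell) < 0\) therefore makes \(\mathrm{id}_b\) non-optimal. Your fallback remark gets this backwards: non-optimality of \(h\) does not rule out \(h = \mathrm{id}_b\). That is exactly the dangerous case. There (Q) demands that \emph{every} arrow \(g\) into \(b\) be non-optimal, since \(\mathrm{id}_b \circ g = g\). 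But (P) only constrains composite optimal arrows and says nothing about primitive ones.

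\textbf{It cannot be patched.} The implication actually fails in this case. Take the free category on \(g \colon a \to b\) and a loop \(\ell \colon b \to b\), with losses
\(L(\mathrm{id}_a) = L(\mathrm{id}_b) = 0\), \(L(\ell) = -1\), \(L(\ell^n) = 0\) for \(n \geq 2\), \(L(g) = 0\), and \(L(\ell^n \circ g) = 1\) for \(n \geq 1\).
Every hom-set attains its minimum. The optimal arrows \(\mathrm{id}_a\), \(\ell\) and \(g\) are all primitive, so (P) holds vacuously. Yet \(\mathrm{id}_b\) is non-optimal while \(\mathrm{id}_b \circ g = g\) is optimal, so (Q) fails.

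\textbf{The fix.} Your case split becomes a complete proof once you add the hypothesis that identities are optimal, i.e.\ \(L(\ell) \geq 0\) for every endomorphism \(\ell\). This holds automatically when \(L\) is a functor, given the paper's standing assumption that minima exist: \(L(\ell) < 0\) would force \(L(\ell^n) = nL(\ell) \to -\infty\). Alternatively, restrict both (P) and (Q) to non-identity \(h\).
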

Theorem~\ref{thm:PO-Character} suggests that Principle~\ref{principle:optimality} is, without uniqueness, a method of \emph{pruning}:
the stripping of non-optimal arrows when building larger optimal arrows.

\subsection{Dynamic Programming}
Principle~\ref{principle:optimality} yields an initial formulation of the dynamic programming equation,
\begin{equation}
  \label{eqn:DP}
  \min_{f \in \HomSet(a,b)} \Loss(f) 
    =
      \min_{
        \begin{subarray}{c}
          c   \in \mathcal{X}\\
          g   \in \HomSet(a, c)\\
          h^* \in \OptSet(c, b)
        \end{subarray}
      }{
        \Loss(h^* \circ g)
      }.
\end{equation}
The dynamic programming equation~\eqref{eqn:DP} says that the optimal loss between objects \(a\) and \(b\in\Category{X}\) can be found by minimizing arrows composed by optimal arrows to the goal \(b.\)
The recursion typically associated with the dynamic programming equation is not apparent in~\eqref{eqn:DP}.
Split the minimization between the primitives to the goal and the remaining arrows to arrive at a formulation that reflects the recursive nature of dynamic programming:
\begin{equation}
  \label{eqn:DP-Recursive}
  \scalebox{0.8}{\mbox{\ensuremath{\displaystyle%
    \min_{f \in \HomSet(a,b)} \Loss(f) 
      =
        \min_{}\{
          \min_{f_p \in \PrimSet(a,b)}{\Loss(f_p)},
          \min_{
            \begin{subarray}{c}
              c   \in \mathcal{X} \setminus \{a, b\}\\
              g   \in \HomSet(a, c)\\
              h^* \in \OptSet(c, b)
            \end{subarray}
          }{
            \Loss(h^* \circ g)
          }
        \}}}}.
\end{equation}
Two subproblems arise in~\eqref{eqn:DP-Recursive}:
a ``smaller'' minimization problem over intermediate objects, and the minimization of primitive ``shortcuts.''
This is the principle of conditional optimization~\citep{sniedovich_dynamic_1992}.

Up until now we have been assuming the principle holds.
It turns out that under a reasonable assumption, the principle does hold.
Recall that \(L: \Category{X} \to \Reals\) is a functor\footnote{Endow \(\Reals\) with the additive category.} if \(L(h \circ g) = L(h) + L(g).\)
Functoriality of the loss is a natural assumption to make:
the loss of an action is the sum of the losses of its more primitive actions.
\begin{thm}
  \label{thm:FunctorialLoss}
  If the loss \(L: \Category{X} \to \Reals\) is a functor, then Principle~\ref{principle:optimality} holds.
\end{thm}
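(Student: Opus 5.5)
The plan is to argue by contradiction, using the exchange argument that functoriality makes available.

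Fix \(a, b, c \in \Category{X}\) and a composite optimal arrow \(f = h \circ g \in \OptSet(a, b)\) with \(g \in \HomSet(a, c)\) and \(h \in \HomSet(c, b)\). Suppose \(h\) is not optimal. By the standing assumption that minima exist on non-empty hom-sets, and since \(h \in \HomSet(c, b)\), there is some \(h' \in \HomSet(c, b)\) with \(L(h') < L(h)\); any \(h' \in \OptSet(c,b)\) will do.

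Composability in the category gives \(h' \circ g \in \HomSet(a, b)\). Because \(L\) is a functor into the additive category \(\Reals\),
\[
  L(h' \circ g) = L(h') + L(g) < L(h) + L(g) = L(h \circ g) = L(f).
\]
This contradicts the optimality of \(f\) over \(\HomSet(a, b)\). Hence \(h \in \OptSet(c, b)\), which is exactly Principle~\ref{principle:optimality}.

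The same argument proves the equivalent pruning form of Theorem~\ref{thm:PO-Character} directly. If \(h\) is non-optimal, then for every \(g\) the arrow \(h \circ g\) is strictly beaten by \(h' \circ g\), so \(h \circ g\) is non-optimal.

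There is no real obstacle. The only points needing care are bookkeeping.
\begin{enumerate}[label={(\roman*)}]
  \item Non-optimality of \(h\) must produce a strictly cheaper competitor. This follows from the definition of optimal as attaining the minimum, so no existence of the minimum is even needed beyond the definition.
  \item The proof uses only additivity and strict monotonicity of \(+\) in \(\Reals\), not the zero-cost identity condition.
  \item The argument also covers the degenerate cases where \(g\) or \(h\) is an identity, although Principle~\ref{principle:optimality} is stated only for composite arrows.
\end{enumerate}
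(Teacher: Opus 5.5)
Your proof is correct and follows the paper's argument: both rest on the exchange inequality \(L(h\circ g) = L(h)+L(g) > L(h^*)+L(g) = L(h^*\circ g)\). The paper proves the pruning statement and then appeals to the equivalence in Theorem~\ref{thm:PO-Character}, whereas you obtain Principle~\ref{principle:optimality} directly by contradiction. Your route has the small advantage of not depending on that equivalence, which the paper states without proof.
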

\begin{pf}
  Fix \(c,\) \(b \in \Category{X}.\)
  Let \(h \notin \OptSet(c, b)\) be a non-optimal arrow.
  If there exists an optimal arrow, pick \(h^* \in \OptSet(c, b).\)
  Let \(a \in \Category{X}\) be any object and consider any arrow \(g \in \HomSet{a, c}.\)
  Verify that,
  \[
    L(h \circ g) = L(h) + L(g) > L(h^*) + L(g) = L(h^* \circ g),
  \]
  and conclude that any arrow post-composed by \(h\) is non-optimal.
  This was done for all arrows (when an optimal arrow exists) thus establishing Theorem~\ref{thm:PO-Character}.
  Principle~\ref{principle:optimality} follows.\qed\hfill
\end{pf}

Assume the loss \(L\) is a functor.
This assumption, alongside the assumption that all composite arrows decompose into primitive arrows, simplifies~\eqref{eqn:DP-Recursive} into,
\begin{equation}
  \label{eqn:DP-Functor}
    \min_{f \in \HomSet(a,b)} \Loss(f) 
      =
          \min_{
            \begin{subarray}{c}
              c   \in \mathcal{X} \setminus \{a, b\}\\
              g   \in \PrimSet(a, c)\\
              h^* \in \OptSet(c, b)
            \end{subarray}
          }{
            \Loss(g) + \Loss(h^*)
          }
\end{equation}
Denoting the optimal cost \(\min_{f \in \HomSet(a,b)} \Loss(f)\) by \(J^*_{L}(a, b),\) the dynamic programming equation~\eqref{eqn:DP-Functor} becomes the functional equation,
\begin{equation}
  \label{eqn:DP-Classic}
    J^*_L(a,b)
      =
          \min_{
            \begin{subarray}{c}
              c   \in \mathcal{X} \setminus \{a, b\}\\
              g   \in \PrimSet(a, c)
            \end{subarray}
          }{
            \Loss(g) + J^*_L(c,b)
          }.
\end{equation}
The dynamic programming algorithm uses this expression to incrementally build the optimal arrows to \(b\) from \(a\) using the intermediate optimal costs to \(b\) from an intermediate object \(c.\)

Dynamic programming algorithms produce a sequence of subcategories related by functors to the original category.
At every step, the algorithm prunes non-optimal arrows that terminate on an active object and builds optimal arrows using the composition of the remainder.
The process terminates when at least one object resides in the goal category \(\Category{G}\) and all other arrows are provably non-optimal relative to the arrow from the goal.
In the case where the number of arrows to any given object is finite, search algorithms that solve the shortest-path problem --- like Dijkstra's algorithm~\citep{sniedovich_dijkstras_2006} or \(\text{A}^\star\)~\citep{4082128, martelli_dynamic_1975} --- apply\footnote{These algorithms assume the functoriality of the loss and do not apply in general to any category that satisfies Principle~\ref{principle:optimality}.}.
\begin{center}
  \begin{tikzpicture}[x = 0.9cm, y = 0.9cm]

    \node[at = {(0, 0)}] (A) {b};
    \node[above right = 1 of A] (B) {\(o_1\)};
    \node[right = 1 of A] (C) {c};
    \node[below right = 1 of A] (D) {\(o_2\)};
    
    \node[right = 1 of B] (E) {\(o_3\)};
    \node[right = 1 of C] (F) {a};
    \node[right = 1 of D] (G) {\(o_4\)};
    
    \node[category, fit=(A) (B) (C) (D) (E) (F) (G), ] (X1) {};
    \node[below right=2pt of X1.north west, anchor=north west] {\(\Category{X}\)};
    \node[category, fill=Purple, fill opacity=0.2, fit=(F) (G)] (Goal1) {};
    \node[at = {(Goal1.base)}] {\(\Category{G}\)};

    \draw[- Latex]
      (B) -- (A);
    \draw[- Latex]
      (C) -- (A);
    \draw[- Latex]
      (D) -- (A);
    
    \draw[- Latex]
      (E) -- (B);
    \draw[- Latex]
      (E) -- (C);

    \draw[- Latex]
      (F) -- (B);
    \draw[- Latex]
      (F) -- (C);
    \draw[- Latex]
      (F) -- (D);

    \draw[- Latex]
      (G) -- (C);
    \draw[- Latex]
      (G) -- (D);

    \node[right = 0.6 of X1] (Q) {\(\cdots\)};
    \node[right = 0.75 of Q] (A2) {b};
    \node[above right = 1 of A2] (B2) {\(o_1\)};
    \node[right = 1 of A2] (C2) {c};
    \node[below right = 1 of A2] (D2) {\(o_2\)};
    
    \node[right = 1 of B2] (E2) {\(o_3\)};
    \node[right = 1 of C2] (F2) {a};
    \node[right = 1 of D2] (G2) {\(o_4\)};
    
    \node[category, fit=(A2) (B2) (C2) (D2) (E2) (F2) (G2)] (X2) {};
    \node[below right=2pt of X2.north west, anchor=north west] {\(\Category{X}\)};
    \node[category, fill=Purple, fill opacity=0.2, fit=(F2) (G2)] (Goal2) {};
    \node[at = {(Goal2.base)}] {\(\Category{G}\)};

    \draw[- Latex]
      (B2) -- (A2);
    \draw[- Latex]
      (C2) -- (A2);
    \draw[- Latex]
      (D2) -- (A2);
    \draw[- Latex]
      (E2) -- (A2);
    \draw[- Latex, thick, red]
      (F2) .. controls ($(F2.base)!0.5!(D2.base)$) .. (A2);
    \draw[- Latex, thick, red]
      (G2) -- (A2);

    \draw[- Latex, thick]
      (X2) -- (Q);
    \draw[- Latex, thick]
      (Q)-- (X1);
    
  \end{tikzpicture}
\end{center}

Dynamic programming can be augmented with a cost-to-come heuristic \(H: \Category{X} \to \Reals\) which assigns a cost to an object placing a lower-bound on the optimal cost to arrive to said object.
The heuristic does not change the optimal arrows from a goal category \(\Category{G}\) as long as the heuristic is zero for all objects in \(\Category{G}\) and positive otherwise.
The heuristic's effect on the algorithm is accounted for through the heuristic-adjusted loss,
\[
  L_H(a, b) \defineas L(a, b) - H(a) + H(b).
\]
which is functorial if \(L\) is functorial.
The category must be restricted to only those arrows where the heuristic is non-increasing, i.e. \(-H(a) + H(b) \geq 0.\)
Principle~\ref{principle:optimality} then remains valid.
In the case where the number of arrows to any given object is finite, shortest-path algorithms with a heuristic-adjusted loss are informed by the heuristic.

The category of regular nonlinear control systems is not finite:
the number of arrows into a system is infinite.
The number of arrows to any given system can be made finite through restriction such as when~\cite{levine_differential_2025-1} considers only pure prolongations.
Either Dijkstra's algorithm or \(\text{A}^\star\) apply directly in this case to arrive at a minimal dynamic extension within the restricted class of extensions.
The bounds computed by~\cite{guay_condition_1997} makes this algorithm finite.

\section{Minimal Dynamic Extension through Dynamic Search}
\label{section:deds}
Fix a base, regular nonlinear control system \((\Manifold{M}_0, \Ideal{I}_0^{(0)}).\)
Let \(\Category{X}\) denote the subcategory of regular nonlinear control systems \((\Manifold{M}, \Ideal{I})\) \(\in\) \(\ObjectClass_{\Category{X}}\) which are extension systems of \((\Manifold{M}_0, \Ideal{I}_0^{(0)}).\)
The arrows of this category are the submersions \(\pi \in \ArrowClass_{\Category{X}}\) seen in Definition~\ref{def:extension}.
The primitive arrows of this category are precisely those one-fold dynamic extensions along a regular zero dynamics foliations of type \((1, \kappa)\);
a single channel of length \(\kappa\) is prolonged exactly once to \(\kappa+1\).
Any other extension is composite through the meet operation.
By Theorem~\ref{thm:DFL-LSOP}, all composite arrows decompose into primitive arrows that are extension systems of \((\Manifold{M}_0, \Ideal{I}_0^{(0)})\) in their own right.
Let the goal category \(\Category{G} \subseteq \Category{X}\) be those extension systems that satisfy the feedback linearizability condition~\eqref{eqn:involutivity} and~\eqref{eqn:controllability}.

The minimal dynamic extension to a feedback linearizable system seeks to find the minimal dynamic precompensator in terms of dimension.
Define the loss,
\[
  L(\pi: \Manifold{M}_2 \to \Manifold{M}_1)
    : \Category{X} \to \Reals,
    \pi \mapsto \Dimension\Manifold{M}_2 - \Dimension\Manifold{M}_1,
\]
for any arrow \(\pi \in \ArrowClass_{\Category{X}}\) between a system \((\Manifold{M}_1, \Ideal{I}_1),\) and its extension system \((\Manifold{M}_2, \Ideal{I}_2).\)
The loss measures the change in the system order induced by dynamic extension.
Clearly, \(L(\pi) = 1\) for primitive arrows and is zero for the identity arrow.
The loss is functorial.
It follows by Theorem~\ref{thm:FunctorialLoss} that Principle~\ref{principle:optimality} holds.
The search for a dynamic extension to a feedback linearizable system can be performed by recursively applying~\eqref{eqn:DP-Classic}.
To accelerate the search, we introduce a cost-to-come heuristic.

\subsection{The Leading Integrability Defect Heuristic}
Although we cannot compute the loss between a nonlinear control system \((\Manifold{M}, \Ideal{I}^{(0)})\) and the ``nearest'' feedback linearizable nonlinear control system, we can still define an admissable heuristic that puts a best-case, lower-bound on the loss.
To construct the heuristic, we need a measure of how far away an ideal is from involution.
\begin{defn}[Integrability Defect]
  Let \(\Ideal{I} \subseteq \FormSections{M}\) be an ideal. 
  The \defining{integrability defect} \(\Defect(\Ideal{I})\) is defined as
  \(
    \Defect(\Ideal{I})
      \defineas 
        \Dimension(\Ideal{I} / \Ideal{I}^{(1)}).
  \)
\end{defn}
The \(\Defect(\langle \Ideal{I}^{(k)}, dt \rangle)\) measures the dimension of the space of forms that fail to close under exterior derivative and thus acts as an obstruction to the integrability condition.
This motivates a heuristic.
\begin{defn}[Leading Integrability Defect]
  \hfill\\Let \((\Manifold{M},\) \(\Ideal{I}^{(0)})\) be a regular nonlinear control system.
  The \defining{leading integrability defect} \(\Lid(\Manifold{M}, \Ideal{I}^{(0)})\) is defined in the following way.
  Find the smallest index \(k^\star\) \(\geq\) \(0\) where \(\langle \Ideal{I}^{(k^\star)},\) \(dt \rangle\) fails to be integrable.
  If all the ideals are integrable, set \(k^\star = 0.\)
  Define,
  \(
    \Lid(\Manifold{M}, \Ideal{I}^{(0)})
  \)
  \(\defineas\)
  \(
    \Defect(\langle \Ideal{I}^{(k^\star)}, dt \rangle).
  \)
\end{defn}
The \(\Lid(\cdot)\) is zero (and only zero) for feedback linearizable systems because of~\eqref{eqn:involutivity}.
For dynamically feedback linearizable systems, the \(\Lid(\cdot)\) puts a best-case, lower-bound on how many dynamic extensions must occur to bring the system into linearizability.

Finding an extension that minimizes \(\Lid(\cdot)\) is tricky because, when integrability is achieved on an ideal \(k,\) the defect may spike in the ideal \(k+1.\)
Nevertheless, eliminating the defect in a fixed ideal still often minimizes the leading defect.
Let \(k^\star > 0\) be the index for \(\Lid(\Manifold{M}, \Ideal{I}^{(0)}).\) 
Take \(\sigma \in \langle \Ideal{I}^{k^\star}, dt \rangle\) to be the form that is a wedge product of all the generators of the defective ideal.
The ideal fails~\eqref{eqn:involutivity}, i.e.,
\(
  \langle \Ideal{I}^{k^\star}, dt \rangle
\)
\(\not\subseteq\)
\(
  \langle \Ideal{I}^{k^\star}, dt \rangle^{(\infty)},
\)
and we can identify generators,
\[ 
  \frac{\langle \Ideal{I}^{k^\star}, dt \rangle}{\langle \Ideal{I}^{k^\star}, dt \rangle^{(1)}}
    = \left\langle [\omega^1], \ldots, [\omega^r] \right\rangle.
\]
Necessarily \(d\omega^i \wedge \sigma \neq 0.\)
Find the smallest set \(V = \{dv^1, \ldots, dv^\ell\}\) that solves,
\[
  \begin{array}{cc}
    \min_{%
        V \subseteq \langle \Ideal{I}^{(k^\star - 1)}, dt\rangle        
    } 
      \;& \ell\\
    \text{s.t.}
      \;&
      \sigma \wedge dv^{1} \wedge \cdots \wedge dv^{\ell} \neq 0\\
      \;& 
      d\omega^1 \wedge \sigma \wedge dv^{1} \wedge \cdots \wedge dv^{\ell} = 0\\
      \;&\vdots\\
      \;&
      d\omega^r \wedge \sigma \wedge dv^{1} \wedge \cdots \wedge dv^{\ell} = 0
  \end{array}
\]
The set \(V\) contains the smallest number of exact forms in the preceding ideal (integrable by~\eqref{eqn:involutivity}) that, if prolonged into the defective ideal, renders the \(\omega^i\) differentially closed.
The problem bears resemblance to the hitting set problem.
Solving this problem identifies the outputs \(v^i\) of uniform vector relative degree \(k^\star\) that, upon a one-step dynamic extension, brings the ideal 
\(
  \langle \Ideal{I}^{k^\star}, dt \rangle
\)
into involution.

The aforementioned loss \(L\) can be adjusted with the heuristic \(H \defineas \Lid\) informed by this optimization problem to arrive at a functorial loss \(L_H\) which guides the search for a minimal dynamic extension towards resolving~\eqref{eqn:involutivity}.
Recall that the arrows of the category only need to be restricted to dynamic extensions that do not make the heuristic \emph{worse}:
the restricted category includes extensions that neither improve nor degrade the defect although improvements are favoured by search algorithms.
Let us turn to a few, simple examples to see the heuristic in action.

\subsection{Examples}
The examples below are supported by symbolic computation performed in~\cite{maple_2025}.
Worksheets are available publicly at~\citep{dsouza_deds_2026} alongside a custom \texttt{Maple} package that executes the dynamic search in the finite case.

\subsubsection{A First-Order Dynamic Precompensator}
Define the manifold \(\Manifold{M}\) \(=\) \(\Reals\) \(\times\) \(\Reals^4\) \(\times\) \(\Reals^6\) with input vector \(u \in \Reals^4\) and states \((x,y,\theta,z,w,\beta) \in \Reals^6.\)
Set \(\alpha = \theta + \beta\) and \(\gamma = \alpha + z.\)
Consider the nonlinear control system,
\[
  \scalebox{0.8}{\mbox{\ensuremath{\displaystyle%
    \begin{aligned}
    \Ideal{I}^{(0)}
      = \langle
        & dx - u^1\cos(\gamma) dt,
          dy - u^1\sin(\gamma) dt,
          d\theta - u^2 dt,\\
        & dz - u^3 e^{u^4} \cos(\alpha) dt,
        dw - u^3 e^{u^4} \sin(\alpha) dt,
        d\beta - u^4 e^{u^3} dt
      \rangle.
    \end{aligned}
  }}}
\]
The system is not feedback linearizable.
Moreover, there does not exist a fourth-order dynamic precompensator arrived at through pure prolongations that results in a feedback linearizable system.
A fifth-order dynamic precompensator exists: prolong \(u^2\) once and \((u^3, u^4)\) twice each.
We now show that there exists a first-order dynamic precompensator that brings the system into involution.

The first application of the heuristic-adjusted, dynamic programming equation for this problem involves selecting the amongst the optimal primitive arrow to \((\Manifold{M}, \Ideal{I}^{(0)}).\)
The heuristic-adjusted loss is
\begin{equation}
  \label{eqn:ex:loss}
  L(\pi_1) = 1 - \Lid(\Manifold{M}_2, \Ideal{I}^{(0)}_2) + \Lid(\Manifold{M}, \Ideal{I}^{(0)}).
\end{equation}
The loss of a primitive arrow is minimized by selecting the nonlinear control system that is only one-dimension larger but reduces the leading integrability defect.
In this case, the leading defect occurs at \(\langle \Ideal{I}^{(1)}, dt\rangle\) where the quotient \(\langle \Ideal{I}^{(1)}, dt\rangle\) with \(\langle \Ideal{I}^{(1)}, dt\rangle^{(1)}\) is generated by forms,
\[
  dx - \cot(\gamma) dy,
  dz - \cot(\alpha) dw.
\]
The \(\Lid(\Manifold{M}, \Ideal{I}^{(0)}) = 2.\)
We seek an exact one-form that, upon bringing into this ideal through a one-step dynamic extension, removes the defect.
Prolonging an exact form \(dh\) in \(\langle \Ideal{I}^{(0)}, dt \rangle\) to \(\langle \Ideal{I}^{(1)}, dt\rangle\) (that wasn't there originally) involves extending a virtual input once;
equivalently, this is an extension along a regular zero dynamics foliation induced by output \(h.\)
The exact one-form \(d\gamma\) \(\in\) \(\langle \Ideal{I}^{(0)}, dt \rangle\) is one such candidate.
Using Definition~\ref{def:vector-relative-degree}, determine that \(\gamma\) has relative degree one.
Perform a one-step dynamic extension along its regular zero dynamics foliation to produce the new differential constraint,
\begin{equation}
  \label{eqn:ex:1:constraint}
  \scalebox{0.8}{\mbox{\ensuremath{\displaystyle%
  \begin{aligned}
    d\LieDerivative{f}\gamma - \LieDerivative{f}^2\gamma dt
      &= - u^3 e^{u^4} \sin\alpha d\alpha + du^2\\
      &+ (u^4 e^{u^3} + e^{u^4}\cos\alpha) du^3\\
      &+ (e^{u^3} + u^3 e^{u^4}\cos\alpha) du^4\\
      &- q {dt},
  \end{aligned}
  }}}.
\end{equation}
A virtual input can replace \(u^2\) and be extended to form the new input \(q.\)
This defines a new manifold with additional coordinate \(q\) and the constraint~\eqref{eqn:ex:1:constraint}.
This extended system satisfies the feedback linearizability conditions~\eqref{eqn:involutivity} and~\eqref{eqn:controllability}.
The dynamic programming algorithm terminates since any other extensions necessarily have a larger cost.

\subsubsection{A Second-Order Dynamic Precompensator}
Consider a variation of the previous nonlinear control system,
\[
  \scalebox{0.8}{\mbox{\ensuremath{\displaystyle%
    \begin{aligned}
    \Ideal{I}^{(0)}
      = \langle
        & dx - u^1\cos(\theta + z) dt,
          dy - u^1\sin(\theta + z) dt,
          d\theta - u^2 dt,\\
        & dz - u^3 e^{u^4} \cos(\beta) dt,
        dw - u^3 e^{u^4} \sin(\beta) dt,
        d\beta - u^4 e^{u^3} dt
      \rangle.
    \end{aligned}
  }}}
\]
Not even a fifth-order dynamic precompensator arrived at through pure prolongations yields a feedback linearizable system.
In fact, a second-order dynamic precompensator is required to arrive at integrability.

The same heuristic-adjusted loss~\eqref{eqn:ex:loss} applies but, unlike the previous example, the integrability defect cannot be eliminated fully by a one-step extension.
The obstruction to integrability are the forms,
\[
    dx - \cot(\theta + z) dy, dz - \cot\beta dw,
\]
in the ideal \(\langle \Ideal{I}^{(1)}, dt \rangle.\)
These look similar to the previous example, but, for these forms, one must pick an exact differential from each of the ideals,
\[
    \langle dx, dy, d\theta + dz \rangle, \langle d\beta, dz, dw\rangle,
\]
respectively.
These ideals have no single, common exact one-form.
Any dynamic extension that brings this ideal into involution must involve a two-step extension.
We can begin with an extension of along \(z\) and then subsequently add an extension of \(\theta\) to form an extension along the regular zero dynamics foliation induced by output \((\theta + z, z).\)
The extensions result in a feedback linearizable system.
The dynamic programming algorithm terminates here.

\section{Conclusion}
Facing the inevitable computational wall for dynamic feedback linearization, we presented a dynamic search for the dynamic extension that brings a system into involution.
The approach builds on the structure of regular zero dynamics manifolds to characterize an optimization category of nonlinear control systems.
The search can be made finite via restriction.
Otherwise, a heuristic can guide the search by selecting arrows that are likely to arrive from a feedback linearizable system.
This formulation raises a number of interesting questions.
Is there a way to form equivalence classes of arrows to render the search finite in general?
If not, is it possible to identify the optimal heuristic and systematically generate a sequence of \emph{systems of} linear PDEs through the functional equation?
The presented approach also lends itself to other interesting problems, like identifying a minimal dynamic extension to turn a zero dynamics manifold into a regular zero dynamics manifold (``dynamic'' transverse feedback linearization).
The loss function may also be modified to encode design constraints on the dynamic precompensator.

\bibliography{dflconfbib}

\appendix
\section{Proof of Proposition~\ref{prop:reg-zero-dyn-fol}}
\label{pf:prop:reg-zero-dyn-fol}
\begin{pf}
  First deduce from condition~\ref{prop:reg-zero-dyn-fol:controllability} that \emph{any} first integrals of \(\mathfrak{I}(\Foliation{Z})\) can be made time-independent.
  Moreover, using conditions~\ref{prop:reg-zero-dyn-fol:regularity} and~\ref{prop:reg-zero-dyn-fol:involutivity}, deduce that all the ideals, \(\left\langle \Ideal{I}^{(k)}, dt \right\rangle\) \(\cap\) \(\mathfrak{I}(\Foliation{Z})\) are generated by a finite number of \emph{exact} smooth one-forms, i.e. differentials of smooth functions.
  Define the controllability indices,
  \[
    \begin{aligned}
      \rho^0 &\defineas
        \Dimension(
          \mathfrak{I}(\Foliation{Z})
        )
        -
        \Dimension(
          \langle \Ideal{I}^{(0)}, dt \rangle \cap \mathfrak{I}(\Foliation{Z})
        ),\\
      \rho^i &\defineas
        \Dimension\left(
          \frac{
            \langle \Ideal{I}^{(i-1)}, dt \rangle \cap \mathfrak{I}(\Foliation{Z})
          }{
            \langle \Ideal{I}^{(i)}, dt \rangle \cap \mathfrak{I}(\Foliation{Z})
          }
        \right),
      \quad
        1 \leq i \leq n,
    \end{aligned}
  \]
  and Kronecker indices,
  \[
    \kappa^i \defineas \| \{ j : \rho_j \geq i \} \| - 1.
  \]
  The offset is non-standard and arises due to the inclusion of outputs of relative degree zero.
  These indices are well-defined due to the regularity condition~\ref{prop:reg-zero-dyn-fol:regularity}.

  To show that \(\Foliation{Z}\) is a regular zero dynamics foliation of type \((\rho^0, \kappa)\) where \(\kappa \defineas (\kappa^1, \ldots, \kappa^{\rho^0}),\) we must find the output \(h: \Manifold{M} \to \Reals^{\rho^0}\) of vector relative degree \(\kappa\) whose level sets coincide with the leaves of \(\Foliation{Z}.\)
  The algorithm to determine the outputs mirrors exactly the algorithm in~\cite{dsouza_algorithm_2023} mutatis mutandis.
  We briefly summarize the approach here.

  Moving from the most to the least derived ideal, outputs of higher to lower relative degree may be identified where a change occurs in \(\rho^i.\)
  Starting at \(\langle \Ideal{I}^{(\kappa^{1}-1)}, dt\rangle,\) integrate the differentially closed ideal \(\langle \Ideal{I}^{(\kappa^1-1)}, dt \rangle \cap \mathfrak{I}(\Foliation{Z})\) to find a family of \(\rho^{\kappa^1}\) outputs \(h^1,\) \(\ldots,\) \(h^{\rho^{\kappa^1}}\) with uniform vector relative degree \(\kappa^1.\)
  Compute the regular zero dynamics foliation
  \[
    \Foliation{Z}^{\kappa^1} = \mathfrak{Z}(\langle dh^1, \ldots, dh^{\rho^{\kappa^1}}\rangle).
  \]

  Working backwards, find the next most derived ideal \(j\) where \(\rho^j \neq \rho^{\kappa^1}.\)
  This will be at index \(\rho^{\kappa^2}.\)
  Integrate the ideal \(\langle \Ideal{I}^{(\rho^{\kappa^2}-1)}, dt \rangle \cap \mathfrak{I}(\Foliation{Z})\) and adapt the exact generators so that it takes the form,
  \[
    \langle dh^1, \ldots, \LieOperator_{f}^{\kappa^1 - \rho^{\kappa^2}} dh^{\rho^{\kappa^1}}, dq^1, \ldots, dq^{\rho^{\kappa^2} - \rho^{\kappa^1}}\rangle.
  \]
  Here generators are split to contain the maximal number of generators residing in \(\mathfrak{I}(\Foliation{Z}^{\kappa^1})\) and the remaining novel generators \(dq^i.\)
  This is only possible due to~\ref{prop:reg-zero-dyn-fol:invariance} (i.e.~\eqref{eqn:controlled-invariance}) which implies that repeated Lie derivatives of the exact generators \(dh^i\) found in the derived flag and \(\mathfrak{I}(\Foliation{Z}^{\kappa^1})\) remain in \(\mathfrak{I}(\Foliation{Z}^{\kappa^1})\) \(\subseteq\) \(\mathfrak{I}(\Foliation{Z}).\)
  The remaining generators \(dq^i\) integrate to form an output with uniform vector relative degree \(\kappa^2\) --- they do not reside in the more derived ideal --- and also form vector relative degree with the existing output components \(dh^i.\)
  Thus, we define the new regular zero dynamics foliation,
  \[
    \Foliation{Z}^{\rho^{\kappa^2}}
      \defineas 
      \Foliation{Z}^{\kappa^1}
      \wedge
      \mathfrak{Z}(\langle dq^1, \ldots, dq^{\rho^{\kappa^2} - \rho^{\kappa^1}}\rangle).
  \]
  
  Repeating these series of steps generates a descending sequence of regular zero dynamics foliations,
  \[
    \Foliation{Z}^{\kappa^1} \geq \cdots \geq \Foliation{Z}^{1},
  \]  
  until we arrive to index \(0.\)
  At this index, if \(\rho^0 \neq \rho^1,\) then there exists \(\rho^0 - \rho^1\) outputs of relative degree zero.
  These may be identified by repeating the same series of steps except by directly adapting generators of \(\mathfrak{I}(\Foliation{Z})\) against the regular zero dynamics foliation \(\Foliation{Z}.\)
  This produces the regular zero dynamics foliation \(\Foliation{Z}^0\) which must be equal to \(\Foliation{Z}\) by a dimension counting argument and the fact that all the generators used are from \(\mathfrak{I}(\Foliation{Z}).\)
  Thus, \(\Foliation{Z}\) is a regular zero dynamics foliation.
  \qed\hfill
\end{pf}

\end{document}